\newtheorem{thm}{Theorem}
\newtheorem{lem}[thm]{Lemma}
\numberwithin{thm}{section}
\numberwithin{equation}{section}
\theoremstyle{definition}
\newcommand{\rat}{\mathbb Q}
\newcommand{\com}{\mathbb C}
\newcommand{\alg}{\overline\rat}
\newcommand{\algt}{\alg^{\times}}
\newcommand{\nat}{\mathbb N}
\newcommand{\G}{\mathcal G}
\newcommand{\tor}{\mathrm{Tor}}
\newcommand{\gal}{\mathrm{Gal}}
\newcommand{\norm}{\mathrm{Norm}}
\newcommand{\rad}{\mathrm{Rad}}
\newcommand{\comment}[1]{}
\title{The infimum in the metric Mahler measure}
\author[C.L. Samuels]{Charles L. Samuels}
\address{Max-Planck-Institut f\"ur Mathematik, Vivatsgasse 7, 53111 Bonn, Germany}
\email{csamuels@mpim-bonn.mpg.de}
\subjclass[2000]{Primary 11R04, 11R09}
\keywords{Weil height, Mahler measure, metric Mahler measure, Lehmer's problem}
\begin{document}

\begin{abstract}
	Dubickas and Smyth defined the metric Mahler measure on the multiplicative group of non-zero algebraic numbers.
	The definition involves taking an infimum over representations of an algebraic number $\alpha$ by other
	algebraic numbers.  We verify their conjecture that the infimum in its definition is always achieved as well as establish its
	analog for the ultrametric Mahler measure.
\end{abstract}

\maketitle

\section{Introduction}

Let $K$ be a number field and $v$ a place of $K$ dividing the place $p$ of $\rat$.  Let $K_v$ and $\rat_p$
denote the respective completions.  We write $\|\cdot\|_v$ for the unique absolute value on $K_v$ 
extending the $p$-adic absolute value on $\rat_p$ and define 
\begin{equation*}
  |\alpha|_v=\|\alpha \|_v^{[K_v:\rat_p]/[K:\rat]}
\end{equation*}
for all $\alpha\in K$.  Define the {\it Weil height} of $\alpha\in K$ by
\begin{equation*} \label{WeilHeightDef}
  H(\alpha) = \prod_v\max\{1,|\alpha|_v\}
\end{equation*}
where the product is taken over all places $v$ of $K$.  Given this normalization of our absolute values,
the above definition does not depend on $K$, and therefore, $H$ is a well-defined function on $\alg$.
Clearly $H(\alpha)\geq 1$, and by Kronecker's Theorem, we have equality precisely when $\alpha$ is zero 
or a root of unity.  It is obvious that if $\zeta$ is a root of unity then
\begin{equation} \label{HeightRoot}
	H(\alpha) = H(\zeta\alpha),
\end{equation}
and further, if $n$ is an integer then it is well-known that
\begin{equation} \label{HeightInteger}
	H(\alpha^n) = H(\alpha)^{|n|}.
\end{equation}
Also, if $\alpha,\beta\in\algt$ then $H(\alpha\beta) \leq H(\alpha)H(\beta)$ so that $H$ 
satisfies the multiplicative triangle inequality.

We further define the {\it Mahler measure} of an algebraic number $\alpha$ by $$M(\alpha) = H(\alpha)^{[\rat(\alpha):\rat]}.$$
Since $H$ is invariant under Galois conjugation over $\rat$, we obtain immediately
\begin{equation} \label{MahlerProduct}
	M(\alpha) = \prod_{n=1}^N H(\alpha_n),
\end{equation}
where $\alpha_1,\ldots,\alpha_N$ are the conjugates of $\alpha$ over $\rat$.  Further, it is well-known that
\begin{equation} \label{SecondMahlerDefinition}
	M(\alpha) = |A|\cdot\prod_{n=1}^N\max\{1,|\alpha_n|\},
\end{equation}
where $|\cdot |$ denotes the usual absolute value on $\com$.  While the right hand side of \eqref{SecondMahlerDefinition} appears initially to depend upon a particular 
embedding of $\alg$ into $\com$, any change of embedding simply permutes the images of the points $\{\alpha_n\}$ so that \eqref{SecondMahlerDefinition} 
remains unchanged.

It follows, again from Kronecker's Theorem, that $M(\alpha) = 1$ if and only if $\alpha$ is zero or a root of unity.  As part of an algorithm for
computing large primes, D.H. Lehmer \cite{Lehmer} asked whether there exists a constant $c>1$ such that $M(\alpha) \geq c$ in all other cases.
The smallest known Mahler measure greater than $1$ occurs at a root of 
\begin{equation*}
  \ell(x) = x^{10}+x^9-x^7-x^6-x^5-x^4-x^3+x+1
\end{equation*}
which has Mahler measure $1.17\ldots$.  
Although an affirmative answer to Lehmer's problem has been given in many special cases, the general case remains open.  The best known universal lower 
bound on $M(\alpha)$ is due to Dobrowolski \cite{Dobrowolski}, who proved that 
\begin{equation} \label{Dobrowolski}
  \log M(\alpha) \gg \left(\frac{\log\log \deg \alpha}{\log \deg \alpha}\right)^3
\end{equation}
whenever $\alpha$ is not a root of unity.

Recently, Dubickas and Smyth \cite{DubSmyth} defined the {\it metric Mahler measure} of an algebraic number $\alpha$ by
\begin{equation} \label{MetricDef}
	M_1(\alpha) = \inf\left\{\prod_{n=1}^NM(\alpha_n):N\in\nat,\ \alpha_n\in\algt,\ \alpha = \prod_{n=1}^N\alpha_n\right\}.
\end{equation}
Here, the infimum is taken over all ways to represent $\alpha$ as a product of elements in $\algt$.
It is easily verified that $$M_1(\alpha\beta) \leq M_1(\alpha)M_1(\beta)$$ for all $\alpha,\beta\in\algt$, and further,
$M_1$ is well-defined on the quotient group $\G = \algt/\tor(\algt)$.  This implies that the map
$(\alpha,\beta)\mapsto\log M_1(\alpha\beta^{-1})$ defines a metric on $\G$ which induces the discrete topology if and only if
there is an affirmative answer to Lehmer's problem.  

Also in \cite{DubSmyth}, Dubickas and Smyth conjecture that the infimum in the definition of $M_1$ is always achieved.  We verify this
conjecture as well as explicitly determine a set in which the infimum must occur.

If $K$ is any number field let
\begin{equation*}
	\rad(K) = \left\{\alpha\in\algt:\alpha^r\in K\mathrm{\ for\ some}\ r\in\nat\right\},
\end{equation*}
the set of all roots of points in $K$.  Also, for the remainder of this paper,
we write $K_\alpha$ for the Galois closure of $\rat(\alpha)$ over $\rat$.

\begin{thm} \label{Achieved}
	If $\alpha$ is a non-zero algebraic number then there exist $\alpha_1,\ldots,\alpha_N\in\rad(K_\alpha)$ such that 
	$\alpha = \alpha_1\cdots\alpha_N$ and $M_1(\alpha) = M(\alpha_1)\cdots M(\alpha_N)$.
\end{thm}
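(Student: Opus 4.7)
The plan is to proceed in two stages: first, to show that the infimum defining $M_1(\alpha)$ is unchanged when one restricts to factorizations whose factors lie in $\rad(K_\alpha)$; second, to use a Northcott-style finiteness argument inside $\rad(K_\alpha)$ to show that this restricted infimum is attained.

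For the first stage, the key structural observation is that $\G=\algt/\tor(\algt)$ is a divisible torsion-free abelian group, hence a $\rat$-vector space, on which $\gal(\alg/\rat)$ acts; because $K_\alpha/\rat$ is Galois the subspace $\rad(K_\alpha)/\tor$ is precisely the fixed subspace of the normal subgroup $\gal(\alg/K_\alpha)$. For any finite Galois extension $L/\rat$ containing $K_\alpha$ and a representative of $\bar\beta$, setting $H=\gal(L/K_\alpha)$ gives a $\rat$-linear projection
\[
  P(\bar\beta) = \frac{1}{|H|}\sum_{\sigma\in H}\sigma(\bar\beta)
\]
onto $\rad(K_\alpha)/\tor$. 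Applied to a factorization $\alpha=\beta_1\cdots\beta_N$, this yields $\bar\alpha=\sum_n P(\bar\beta_n)$, which lifts, after absorbing a root of unity into one factor, to a factorization $\alpha=\gamma_1\cdots\gamma_N$ with each $\gamma_n\in\rad(K_\alpha)$. The inequality $M(\gamma_n)\le M(\beta_n)$ follows from two ingredients: submultiplicativity of the Weil height applied to the identity $\gamma_n^{|H|}=N_{L/K_\alpha}(\beta_n)$ gives $H(\gamma_n)\le H(\beta_n)$; and normality of $\gal(\alg/K_\alpha)$ in $\gal(\alg/\rat)$ forces $P$ to commute with the full Galois action, so the Galois orbit of $P(\bar\beta_n)$ is no larger than that of $\bar\beta_n$, whence $\deg\gamma_n\le\deg\beta_n$ after choosing a minimal-degree representative. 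Multiplying yields $\prod_n M(\gamma_n)\le\prod_n M(\beta_n)$, so the infimum is unchanged.

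For the second stage, it suffices to show that for fixed $B$ the set of factorizations $\alpha=\gamma_1\cdots\gamma_N$ with $\gamma_n\in\rad(K_\alpha)$ and $\prod_n M(\gamma_n)\le B$ is finite modulo roots of unity and permutation. Each $\gamma_n$ satisfies $\gamma_n^{r_n}\in K_\alpha$ for a minimal $r_n$, with $M(\gamma_n^{r_n})\le M(\gamma_n)^{r_n}$; Northcott's theorem inside the fixed number field $K_\alpha$ then limits the possibilities for $\gamma_n^{r_n}$ once $r_n$ is bounded. Combining the degree relation $\deg\gamma_n\le r_n[K_\alpha:\rat]$ with the constraint $\gamma_1\cdots\gamma_N=\alpha$ should bound $r_n$ and, after consolidating factors lying on a common $\rat$-line in the $\rat$-vector space $\rad(K_\alpha)/\tor$, the number of factors $N$. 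A standard compactness argument then extracts a minimizer from any minimizing sequence.

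The principal obstacle is the second stage: the set $\{\gamma\in\rad(K_\alpha):M(\gamma)\le B\}$ is genuinely infinite modulo roots of unity (for instance the elements $2^{1/n}$ all have Mahler measure $2$), so finiteness of the relevant set of factorizations must be forced by the multiplicative constraint $\prod\gamma_n=\alpha$ together with a Kummer-theoretic bound on the radical exponents $r_n$, rather than by boundedness of the individual factors alone.
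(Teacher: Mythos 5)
Your Stage 1 is, at heart, the same norm-and-take-a-root construction the paper uses (your $\gamma_n^{|H|}=\norm_{L/K_\alpha}(\beta_n)$ is a rescaled version of the paper's $\beta_n^{[K_\alpha(\alpha_n):K_\alpha]} = \prod_{\sigma\in S_n}\sigma(\alpha_n)$, and your height inequality $H(\gamma_n)\le H(\beta_n)$ is fine). But the degree step does not hold as stated. You argue that since $P$ commutes with the Galois action, the orbit of $P(\bar\beta_n)$ in $\G$ is no larger than that of $\bar\beta_n$, and then ``choose a minimal-degree representative.'' The size of the orbit in $\G=\algt/\tor$ does not control the degree of any actual lift in $\algt$: for instance $2^{1/3}$ has a $\G$-orbit of size $1$ (all its conjugates differ from it by a root of unity), yet every lift of its class has degree $3$. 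So ``orbit no larger'' does not yield ``$\deg\gamma_n\le\deg\beta_n$,'' and with the norm taken over all of $H$ rather than over coset representatives of $\gal(L/K_\alpha(\beta_n))$, the naive degree bound for a root is $[L:\rat]$, which is useless. The paper closes exactly this gap with a field-theoretic lemma: since $K_\alpha/\rat$ is Galois, $[K_\alpha(\alpha_n):K_\alpha]=[\rat(\alpha_n):K_\alpha\cap\rat(\alpha_n)]$ and the partial norm $\prod_{\sigma\in S_n}\sigma(\alpha_n)$ lies in $K_\alpha\cap\rat(\alpha_n)$, which is what makes $\deg\beta_n\le\deg\alpha_n$ come out.

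Stage 2 is where the real missing idea lies, and you name the obstacle yourself without resolving it. You are right that $\{\gamma\in\rad(K_\alpha): M(\gamma)\le B\}$ is infinite modulo torsion, so no argument that tries to make the \emph{set of factors} or the \emph{set of factorizations} finite (Kummer bounds on $r_n$, consolidation along $\rat$-lines, a ``compactness'' extraction) will close cleanly; indeed one can insert $2^{1/n}\cdot 2^{-1/n}$ pairs into any factorization without changing the product of Mahler measures, so $R_B(\alpha)$ really is infinite. The paper sidesteps this entirely by proving something weaker but sufficient: for $\gamma\in\rad(K_\alpha)$ there is always $\zeta\gamma^L\in K_\alpha$ and $S\in\nat$ with $M(\gamma)=M(\zeta\gamma^L)^S$, so the \emph{set of values} $\{M(\gamma):\gamma\in\rad(K_\alpha),\ M(\gamma)\le B\}$ injects into $\{H(\beta):\beta\in K_\alpha^\times,\ H(\beta)\le B\}$ and is finite by Northcott. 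Combined with the bound $N\le 1+\log B/\log q(\alpha)$, which comes from requiring all but at most one factor to be a non-torsion element of $\rad(K_\alpha)$ and hence to have $M(\alpha_n)\ge q(\alpha)>1$, the set of attainable products $\prod_n M(\alpha_n)$ is finite, and the infimum is achieved. Without that reduction from ``finitely many points'' to ``finitely many Mahler-measure values,'' the argument you propose does not terminate.
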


Motivated by the work of Dubickas and Smyth, Fili and the author \cite{FiliSamuels} defined a non-Archimedean version of $M_1$ by
replacing the product in \eqref{MetricDef} by a maximum.  That is, define the {\it ultrametric Mahler measure} by
\begin{equation*}
	M_\infty(\alpha) = \inf\left\{\max_{1\leq n\leq N}M(\alpha_n):N\in\nat,\ \alpha_n\in\algt,\ \alpha = \prod_{n=1}^N\alpha_n\right\}.
\end{equation*}
It easily verified that $M_\infty$ satisfies the strong triangle inequality $$M_\infty(\alpha\beta)\leq\max\{M_\infty(\alpha),M_\infty(\beta)\}$$ for all
non-zero algebraic numbers $\alpha$ and $\beta$.  It is further shown in \cite{FiliSamuels} that $M_\infty$ is well-defined on the quotient
group $\G$.  We can now establish the obvious analog of Theorem \ref{Achieved} for $M_\infty$.

\begin{thm} \label{AchievedToo}
	If $\alpha$ is a non-zero algebraic number then there exist $\alpha_1,\ldots,\alpha_N\in\rad(K_\alpha)$ such that 
	$\alpha = \alpha_1\cdots\alpha_N$ and $M_\infty(\alpha) = \max\{M(\alpha_1),\ldots,M(\alpha_N)\}$.
\end{thm}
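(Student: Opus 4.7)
\textbf{Proof proposal for Theorem \ref{AchievedToo}.}
The plan is to parallel the proof of Theorem \ref{Achieved} with the aggregation changed from product to maximum, invoking Theorem \ref{Achieved} itself as a black box where possible. Both theorems assert that the same kind of infimum is attained by a representation drawn from $\rad(K_\alpha)$, so the reduction from arbitrary representations to $\rad(K_\alpha)$-representations should transfer with only cosmetic modifications.

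I would first argue that the infimum defining $M_\infty(\alpha)$ is unchanged when restricted to representations $\alpha = \prod_j \alpha_j$ with each $\alpha_j \in \rad(K_\alpha)$. The proof of Theorem \ref{Achieved} presumably builds such a representation by replacing each factor $\gamma_i$ of an arbitrary representation by a product of elements of $\rad(K_\alpha)$ whose individual Mahler measures are each bounded by $M(\gamma_i)$. Applied factor by factor, the same replacement yields $\max_j M(\alpha_j) \leq \max_i M(\gamma_i)$, giving the reduction for $M_\infty$ as well.

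Having reduced to $\rad(K_\alpha)$-representations, I would extract a minimizer as follows. Applying Theorem \ref{Achieved} to $\alpha$ produces a representation $\alpha = \beta_1 \cdots \beta_N$ in $\rad(K_\alpha)$ with $\prod_i M(\beta_i) = M_1(\alpha)$, so in particular $M_\infty(\alpha) \leq \max_i M(\beta_i) \leq M_1(\alpha)$. To upgrade this bound to an attained value at $c := M_\infty(\alpha)$, I would take a minimizing sequence of $\rad(K_\alpha)$-representations $\alpha = \prod_j \alpha_j^{(k)}$ with $\max_j M(\alpha_j^{(k)}) \to c$ and argue that the sequence of max values must reach $c$ at some finite stage. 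The crucial tool would be a discreteness statement: the Mahler measures of elements of $\rad(K_\alpha)$ that lie below any fixed bound do not accumulate at values strictly greater than $1$ from above.

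The main obstacle will be this last discreteness step. In the $M_1$ case the constraint $\prod_i M(\alpha_i) \leq M_1(\alpha)$ together with a uniform lower bound on $M(\beta) > 1$ in each fixed degree (Dobrowolski) bounds the number of non-torsion factors, after which Northcott's theorem finishes the argument. In the $M_\infty$ case a bound on the max alone does not bound the number of factors, so a more delicate argument is needed. The structural observation I would exploit is that for $\beta \in \rad(K_\alpha)$ with $\beta^r \in K_\alpha$ one has $M(\beta) = H(\beta^r)^{[\rat(\beta):\rat]/r}$, so that such Mahler measures are rational-exponent powers of heights of elements of the fixed Galois number field $K_\alpha$. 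Combined with the Northcott property of $K_\alpha$ and a suitable bound on the exponents $[\rat(\beta):\rat]/r$, this parameterization should confirm that the Mahler measures in $\rad(K_\alpha)$ cannot accumulate from above at any $c > 1$, delivering the attainment of the infimum.
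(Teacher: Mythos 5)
Your high-level strategy matches the paper's: reduce to representations valued in $\rad(K_\alpha)$ using Theorem \ref{Reduction}, then deduce attainment of the infimum from a discreteness statement for Mahler measures of elements of $\rad(K_\alpha)$. You also correctly identify that the crux is this discreteness statement, and that bounding the number of factors (as in the $M_1$ case) is not the right lever here --- indeed the paper's proof of Theorem \ref{AchievedToo} never bounds $N$, it only needs Lemma \ref{HeightInK}, which says $\{M(\gamma):\gamma\in\rad(K_\alpha),\ M(\gamma)\leq B\}$ is finite; then an $\varepsilon$-neighborhood argument finishes in a few lines.

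However, your proposed route to that discreteness, via $M(\beta)=H(\beta^r)^{[\rat(\beta):\rat]/r}$, has a genuine gap. The identity is correct, and you can bound the exponent $[\rat(\beta):\rat]/r$ by $[K_\alpha:\rat]$, but the exponent is in general a \emph{rational} in a bounded interval, not drawn from a finite set: neither $[\rat(\beta):\rat]$ nor the minimal $r$ with $\beta^r\in K_\alpha$ is bounded in terms of $\alpha$, and the ratio $[\rat(\beta):\rat]/r$ need not be an integer (the conjugates of $\beta$ over $K_\alpha$ are of the form $\zeta\beta$ with $\zeta$ an $r$-th root of unity, but these $\zeta$ need not form a group when $K_\alpha$ lacks $r$-th roots of unity). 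Finitely many bases raised to infinitely many bounded rational exponents can yield infinitely many values, so Northcott on $K_\alpha$ alone does not close this. The paper's Lemma \ref{HeightInK} resolves this by showing something sharper: writing the conjugates of $\gamma$ over $K_\alpha$ as $\zeta_1\gamma,\ldots,\zeta_L\gamma$ with $L=[K_\alpha(\gamma):K_\alpha]$, one gets $\zeta_1\cdots\zeta_L\gamma^L\in K_\alpha$ (Lemma \ref{Contained}) and the \emph{integer} exponent $[K_\alpha\cap\rat(\gamma):\rat]$, so that $M(\gamma)=H\bigl((\zeta_1\cdots\zeta_L\gamma^L)^{[K_\alpha\cap\rat(\gamma):\rat]}\bigr)$ is literally a Weil height of an element of $K_\alpha$, and Northcott applies directly. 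You would need to recover this, or something equivalent, to make your discreteness step rigorous. One further small mismatch: you propose invoking Theorem \ref{Achieved} as a black box, but that only gives the trivial bound $M_\infty(\alpha)\leq M_1(\alpha)$ and plays no role in the paper's proof --- in fact the paper proves Theorem \ref{AchievedToo} first because it is the simpler of the two.
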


The remainder of this paper is organized in the following way.  Section \ref{SimpleRep} contains the core of our argument
in which we show that computing $M_1(\alpha)$ and $M_\infty(\alpha)$ requires only the use of elements in $\rad(K_\alpha)$.
In section \ref{Proofs}, we finish the proofs of Theorems \ref{Achieved} and \ref{AchievedToo} by showing, essentially, that
there are only finitely many values for the Mahler measure in $\rad(K_\alpha)$.  Finally, section \ref{Applications} contains 
some applications of these results, giving the location of the algebraic numbers $M_1(\alpha)$ and $M_\infty(\alpha)$.

\section{Reducing to simpler representations} \label{SimpleRep}

The main idea in both proofs involves a method for replacing an arbitrary representation of $\alpha$ by a potentially smaller representation
containing only points in $\rad(K_\alpha)$.  This technique is summarized by the following result.

\begin{thm} \label{Reduction}
	If $\alpha,\alpha_1,\ldots,\alpha_N$ are non-zero algebraic numbers with $\alpha = \alpha_1\cdots\alpha_N$ then
	there exists a root of unity $\zeta$ and algebraic numbers $\beta_1,\ldots,\beta_N$ satifying
	\begin{enumerate}[(i)]
		\item $\alpha = \zeta\beta_1\cdots\beta_N$,
		\item $\beta_n\in \rad(K_\alpha)$ for all $n$,
		\item $M(\beta_n) \leq M(\alpha_n)$ for all $n$.
	\end{enumerate}
\end{thm}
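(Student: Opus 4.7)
The plan is to build $\beta_1,\dots,\beta_N$ by a Galois-averaging construction---take the $K_\alpha$-norm of each $\alpha_n$, then extract an appropriate root---and then verify the three conditions in turn. Let $F$ be a Galois extension of $\rat$ containing $K_\alpha$ and every $\alpha_n$, and set $G = \gal(F/\rat)$, $\Gamma = \gal(F/K_\alpha)$; since $K_\alpha/\rat$ is Galois, $\Gamma$ is normal in $G$. For each $n$, let $k_n = [K_\alpha(\alpha_n):K_\alpha]$, let $\alpha_n^{(1)},\dots,\alpha_n^{(k_n)}$ be the distinct $K_\alpha$-conjugates of $\alpha_n$ in $F$, and put
\[
  \nu_n = \prod_{i=1}^{k_n}\alpha_n^{(i)} = \norm_{K_\alpha(\alpha_n)/K_\alpha}(\alpha_n) \in K_\alpha.
\]
I will let $\beta_n$ be any fixed $k_n$-th root of $\nu_n$ in $\algt$.

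Condition (ii) is immediate from $\beta_n^{k_n} = \nu_n \in K_\alpha$. For (i), each $\alpha_n^{(i)}$ appears with multiplicity $|\Gamma|/k_n$ in the multiset $\{\sigma\alpha_n : \sigma\in\Gamma\}$, so $\beta_n^{|\Gamma|} = \nu_n^{|\Gamma|/k_n} = \prod_{\sigma\in\Gamma}\sigma\alpha_n$; multiplying over $n$ and interchanging the order of products yields
\[
  \Bigl(\prod_{n=1}^N\beta_n\Bigr)^{|\Gamma|} = \prod_{\sigma\in\Gamma}\sigma\Bigl(\prod_{n=1}^N\alpha_n\Bigr) = \prod_{\sigma\in\Gamma}\sigma\alpha = \alpha^{|\Gamma|},
\]
since $\alpha\in K_\alpha = F^\Gamma$ is fixed by $\Gamma$. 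Hence $\alpha = \zeta\prod_n\beta_n$ for some $|\Gamma|$-th root of unity $\zeta$.

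For (iii), the multiplicative triangle inequality and Galois invariance of $H$ give $H(\nu_n)\leq\prod_i H(\alpha_n^{(i)}) = H(\alpha_n)^{k_n}$, and combined with \eqref{HeightInteger} this gives $H(\beta_n) = H(\nu_n)^{1/k_n}\leq H(\alpha_n)$. The main obstacle is the complementary degree bound $\deg\beta_n\leq\deg\alpha_n$, which I will obtain by sharpening the field in which $\nu_n$ lives: a priori $\nu_n\in K_\alpha$, but in fact $\nu_n\in K_\alpha\cap\rat(\alpha_n)$. Indeed, for any $\tau\in\Gamma_n := \gal(F/\rat(\alpha_n))$, normality of $\Gamma$ in $G$ gives $\tau\Gamma\tau^{-1}=\Gamma$, so $\tau$ fixes $\alpha_n$ while permuting its $K_\alpha$-conjugates via $h\alpha_n\mapsto(\tau h\tau^{-1})\alpha_n$, and therefore fixes $\nu_n$; combined with the automatic $\Gamma$-invariance, this places $\nu_n\in F^{\langle\Gamma,\Gamma_n\rangle} = F^\Gamma\cap F^{\Gamma_n} = K_\alpha\cap\rat(\alpha_n)$. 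The standard Galois identity $k_n = [K_\alpha(\alpha_n):K_\alpha] = [\rat(\alpha_n):K_\alpha\cap\rat(\alpha_n)]$ then delivers
\[
  \deg\beta_n \leq k_n\deg\nu_n \leq k_n[K_\alpha\cap\rat(\alpha_n):\rat] = \deg\alpha_n.
\]
Since $H(\alpha_n)\geq 1$, combining both inequalities gives $M(\beta_n) = H(\beta_n)^{\deg\beta_n}\leq H(\alpha_n)^{\deg\alpha_n} = M(\alpha_n)$, which completes (iii).
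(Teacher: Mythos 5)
Your proof is correct and follows essentially the same route as the paper: you form $\nu_n = \norm_{K_\alpha(\alpha_n)/K_\alpha}(\alpha_n)$, take a $k_n$-th root to define $\beta_n$, recover the product identity up to a root of unity by averaging over $\Gamma$, and then get (iii) by separating the height bound $H(\beta_n)\leq H(\alpha_n)$ from the degree bound $\deg\beta_n\leq\deg\alpha_n$, the latter resting on $\nu_n\in K_\alpha\cap\rat(\alpha_n)$ and the degree identity $k_n=[\rat(\alpha_n):K_\alpha\cap\rat(\alpha_n)]$ — exactly the paper's Lemma \ref{Contained}. The one place you depart is in the proof that $\nu_n\in K_\alpha\cap\rat(\alpha_n)$: the paper observes that the minimal polynomial of $\alpha_n$ over $K_\alpha\cap\rat(\alpha_n)$ coincides with that over $K_\alpha$ and reads off the constant term, whereas you take $F$ Galois over $\rat$ and use normality of $\Gamma=\gal(F/K_\alpha)$ in $\gal(F/\rat)$ to show $\gal(F/\rat(\alpha_n))$ permutes the $K_\alpha$-conjugates and hence fixes $\nu_n$; both are fine, and yours is a slightly more direct Galois-theoretic argument at the cost of requiring $F/\rat$ (not merely $F/K_\alpha$) Galois.
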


It is worth noting that we are unaware of an example in which computing $M_1(\alpha)$ or $M_\infty(\alpha)$ requires the use of elements outside
$K_\alpha$.  Hence, it seems reasonable to believe that we can, in fact, choose the points $\beta_n$ to belong to $K_\alpha$.  
Unfortunately, our proof suggests no way to verify this.

The proof of Theorem \ref{Reduction} is based on the following lemma.

\begin{lem} \label{Contained}
	Suppose that $K$ is Galois over $\rat$.  If $\gamma$ is an algebraic number then
	\begin{equation}\label{EqualDegrees}
		[K(\gamma):K] = [\rat(\gamma):K\cap\rat(\gamma)].
	\end{equation}
	Moreover, we have that 
	\begin{equation}\label{ContainedEq}
		\prod_{n=1}^N\gamma_n \in K \cap \rat(\gamma),
	\end{equation}
	where $\gamma_1,\ldots,\gamma_N$ are the conjugates of $\gamma$ over $K$.
\end{lem}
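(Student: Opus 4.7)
The plan is to derive both parts from standard Galois-theoretic degree identities. The key ingredient is that for $K/\rat$ Galois and any extension $L/\rat$, restriction gives an isomorphism $\gal(KL/L)\cong\gal(K/K\cap L)$ and hence $[KL:L] = [K:K\cap L]$. Applying this with $L = \rat(\gamma)$ yields $[K(\gamma):\rat(\gamma)] = [K:K\cap\rat(\gamma)]$. Comparing the two factorisations of $[K(\gamma):\rat]$ in the towers $\rat\subseteq K\subseteq K(\gamma)$ and $\rat\subseteq\rat(\gamma)\subseteq K(\gamma)$, together with the tower $[K:\rat] = [K:K\cap\rat(\gamma)]\,[K\cap\rat(\gamma):\rat]$, gives
\[
  [K(\gamma):K] \;=\; \frac{[\rat(\gamma):\rat]}{[K\cap\rat(\gamma):\rat]} \;=\; [\rat(\gamma):K\cap\rat(\gamma)],
\]
which is \eqref{EqualDegrees}.

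For the second part, set $F = K\cap\rat(\gamma)$, and let $g_K\in K[x]$ and $g_F\in F[x]$ be the minimal polynomials of $\gamma$ over $K$ and over $F$, respectively. Both are monic; by \eqref{EqualDegrees} and the observation that $F(\gamma)=\rat(\gamma)$, both have degree $N$. Since $F\subseteq K$, the polynomial $g_F$ lies in $K[x]$, and because $g_F(\gamma)=0$, the minimality of $g_K$ forces $g_K\mid g_F$. Comparing degrees gives $g_K = g_F$, so every coefficient of $g_K$ lies in $F$. The product $\prod_{n=1}^{N}\gamma_n$ is $(-1)^N$ times the constant term of $g_K$, hence lies in $F = K\cap\rat(\gamma)$, establishing \eqref{ContainedEq}.

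There is no real obstacle beyond choosing the right reformulation: the content of the lemma is that the degree identity \eqref{EqualDegrees} is precisely what forces the minimal polynomial of $\gamma$ over $K$ to descend from $K[x]$ down to $(K\cap\rat(\gamma))[x]$, and once this descent is in hand the statement about the product of $K$-conjugates is automatic from Vieta's formulas.
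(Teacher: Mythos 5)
Your proof is correct and follows essentially the same route as the paper: both derive \eqref{EqualDegrees} from the standard Galois-theoretic fact that $\gal(K\rat(\gamma)/\rat(\gamma))\cong\gal(K/K\cap\rat(\gamma))$ for $K/\rat$ Galois, and both establish \eqref{ContainedEq} by showing that the minimal polynomial of $\gamma$ over $K$ already has its coefficients in $K\cap\rat(\gamma)$, then reading off the constant term. The only cosmetic difference is that you compare the two minimal polynomials $g_K$ and $g_F$ directly via divisibility, whereas the paper starts from the minimal polynomial over $K\cap\rat(\gamma)$ and notes it has the right degree to serve as the minimal polynomial over $K$; these are the same argument rephrased.
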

\begin{proof}
	We see clearly that $K(\gamma)$ is the compositum of $K$ and $\rat(\gamma)$.  Since $K$ is Galois over $\rat$, it follows 
	(see \cite{DF}, p. 505, Prop. 19) that $[K(\gamma):K] = [\rat(\gamma):K\cap\rat(\gamma)]$, verifying \eqref{EqualDegrees}.
	We also observe that
	\begin{equation*}
		(K\cap\rat(\gamma))(\gamma) \subseteq (\rat(\gamma))(\gamma) = \rat(\gamma) \subseteq (K\cap\rat(\gamma))(\gamma)
	\end{equation*}
	so we conclude from \eqref{EqualDegrees} that
	\begin{equation} \label{EqualDegreesAgain}
		[K(\gamma):K] = [(K\cap\rat(\gamma))(\gamma):K\cap\rat(\gamma)].
	\end{equation}
	
	Let $f$ be the monic minimal polynomial of $\gamma$ over $K\cap\rat(\gamma)$ so that $f$ has
	degree $D$ equal to both sides of \eqref{EqualDegreesAgain}.  Now write
	\begin{equation*}
		f(x) = x^D +\cdots +a_1x + a_0
	\end{equation*}
	and note that $f$ is, of course, a polynomial over $K$.  In fact, $f$ is the monic minimal polynomial of $\gamma$ over $K$ 
	because it vanishes at $\gamma$ and has degree $[K(\gamma):K]$.  Since $\gamma_1,\ldots,\gamma_N$ are the conjugates 
	of $\gamma$ over $K$ we conclude that
	\begin{equation*}
		\prod_{n=1}^N\gamma_n = \pm a_0
	\end{equation*}
	which belongs to $K\cap\rat(\gamma)$.
\end{proof}

It is worth observing that if $\rat(\gamma)$ is Galois over $\rat$, then Lemma \ref{Contained} becomes trivial.  Indeed, $\gamma_1\cdots\gamma_N$ certainly
belongs to $K$ by definition.  But also, if $\rat(\gamma)$ is Galois, then $\rat(\gamma)$ contains all conjugates of $\gamma$ over $\rat$.  In particular,
it contains $\gamma_n$ for all $n$, so it contains their product as well.  Of course, the proof of Theorem \ref{Reduction} does not permit such a hypothesis, so 
we require the above lemma.

Additionally, we cannot omit the hypothesis that $K$ be Galois over $\rat$.  For example, let $\gamma_1, \gamma_2$ and $\gamma_3$ be the roots of a third degree, 
irreducible polynomial over $\rat$ having Galois group $S_3$.  This means that $\rat(\gamma_1) \cap \rat(\gamma_2) = \rat$.
Further, we observe that $\gamma_2$ must have degree $2$ over $\rat(\gamma_1)$ implying that its conjugates over this field are $\gamma_2$ and $\gamma_3$.  
But if $\gamma_2\cdot\gamma_3\in\rat(\gamma_2)$ then $\gamma_1\in\rat(\gamma_2)$, a contradiction.

\smallskip\smallskip

\noindent{\it Proof of Theorem \ref{Reduction}}. Suppose that $\alpha = \alpha_1\cdots\alpha_N$ and let $E$ be a Galois extension
of $K_\alpha$ containing $\alpha_n$ for all $n$.  Let $G = \gal(E/K_\alpha)$, $G_n = \gal(E/K_\alpha(\alpha_n))$ and $S_n$ a set of left
coset representatives of $G_n$ in $G$.  We have that
\begin{align*}
	\alpha ^{[E:K_\alpha]} & = \norm_{E/K_\alpha}(\alpha) \\
		& = \prod_{n=1}^N \norm_{E/K_\alpha}(\alpha_n)\\
		& = \prod_{n=1}^N \prod_{\sigma\in G}\sigma(\alpha_n) \\
		& = \prod_{n=1}^N \prod_{\sigma\in S_n}\prod_{\tau\in G_n}\sigma(\tau(\alpha_n)) \\
		& = \prod_{n=1}^N \prod_{\sigma\in S_n}\sigma(\alpha_n)^{|G_n|}
\end{align*}
so we conclude that
\begin{equation} \label{FirstNewFac}
	\alpha^{[E:K_\alpha]} = \prod_{n=1}^N \left(\prod_{\sigma\in S_n}\sigma(\alpha_n)\right)^{[E:K_\alpha(\alpha_n)]}.
\end{equation}
For each $n$, we select an element $\beta_n\in\alg$ such that 
\begin{equation} \label{BetaDef}
	\beta_n^{[K_\alpha(\alpha_n):K_\alpha]} = \prod_{\sigma\in S_n}\sigma(\alpha_n)
\end{equation}
so that, in view of \eqref{FirstNewFac}, we obtain
\begin{equation} \label{NewBetaFac}
	\alpha^{[E:K_\alpha]} = \prod_{n=1}^N \beta_n^{[E:K_\alpha]}.
\end{equation}
This implies the existence of a root of unity $\zeta$ such that
\begin{equation*}
	\alpha = \zeta\beta_1\cdots\beta_N.
\end{equation*}
Furthermore, the set $\{\sigma(\alpha_n): \sigma\in S_n\}$ is precisely the set of conjugates of $\alpha_n$ over $K_\alpha$ so that
\begin{equation*}
	\prod_{\sigma\in S_n}\sigma(\alpha_n) \in K_\alpha.
\end{equation*}
It then follows from \eqref{BetaDef} that $\beta_n\in\rad(K_\alpha)$ for each $n$ as well.  

It remains to show that $M(\beta_n)\leq M(\alpha_n)$ for all $n$.  To see this, we note that \eqref{BetaDef} yields immediately
\begin{equation} \label{DegreeBound1}
	\deg(\beta_n) \leq [K_\alpha(\alpha_n):K_\alpha]\cdot\deg\left(\prod_{\sigma\in S_n}\sigma(\alpha_n)\right).
\end{equation}
Once again, the elements $\sigma(\alpha_n)$ for $\sigma\in S_n$ are precisely the conjugates of $\alpha_n$ over $K_\alpha$.
Hence, we may apply Lemma \ref{Contained} \eqref{EqualDegrees} to find that
\begin{equation*} \label{ContainedAgain}
	\prod_{\sigma\in S_n}\sigma(\alpha_n) \in K_\alpha \cap \rat(\alpha_n).
\end{equation*}
Combining this with \eqref{DegreeBound1}, we obtain
\begin{equation} \label{DegreeBound2}
	\deg(\beta_n) \leq [K_\alpha(\alpha_n):K_\alpha]\cdot[K_\alpha\cap \rat(\alpha_n):\rat].
\end{equation}
Then we find that
\begin{align*}
	M(\beta_n) & \leq H(\beta_n)^{[K_\alpha(\alpha_n):K_\alpha]\cdot[K_\alpha\cap \rat(\alpha_n):\rat]} \\
		& = H\left(\prod_{\sigma\in S_n}\sigma(\alpha_n)\right)^{[K_\alpha\cap \rat(\alpha_n):\rat]} \\
		& \leq H(\alpha_n)^{[K_\alpha(\alpha_n):K_\alpha]\cdot[K_\alpha\cap \rat(\alpha_n):\rat]}
	\end{align*}
where the last inequality follows since the Weil height is invariant under Galois conjugation and
satisfies the triangle inequality.  But also $K_\alpha(\alpha_n)$ is the compositum of $K_\alpha$
and $\rat(\alpha_n)$ so that $[K_\alpha(\alpha_n):K_\alpha] = [\rat(\alpha_n):K_\alpha\cap \rat(\alpha_n)]$
by \eqref{EqualDegrees} in Lemma \ref{Contained}.  This yields
\begin{equation*}
	M(\beta_n) \leq H(\alpha_n)^{[\rat(\alpha_n):K_\alpha\cap \rat(\alpha_n)]\cdot [K_\alpha\cap \rat(\alpha_n):\rat]} = M(\alpha_n)
\end{equation*}
which completes the proof.\qed

\section{Proofs of Theorems \ref{Achieved} and \ref{AchievedToo}} \label{Proofs}

In view of Theorem \ref{Reduction}, it is enough, in the definitions of $M_1$ and $M_\infty$, to consider only representations 
$\alpha = \alpha_1\cdots\alpha_N$ having $\alpha_n\in\rad(K_\alpha)$ for all $n$.  Any representation that fails to have 
this property may simply be replaced a smaller represenation that does.  The remainder of our proofs of both Theorem \ref{Achieved}
and \ref{AchievedToo} require us to show that such representations yield only finitely many different values for
\begin{equation*}
	\max_{1\leq n\leq N}M(\alpha_n)\quad\mathrm{and}\quad \prod_{n=1}^NM(\alpha_n).
\end{equation*}
The following lemma provides the starting point for this argument.

\begin{lem} \label{HeightInK}
	Let $K$ be a Galois extension of $\rat$.  If $\gamma\in\rad(K)$ then there exists a root of unity $\zeta$ and $L,S\in\nat$
	such that $\zeta\gamma^L\in K$ and
	\begin{equation*}
		M(\gamma) = M(\zeta\gamma^L)^S.
	\end{equation*}
	In particular, the set
	\begin{equation*}
		\{M(\gamma):\gamma\in\rad(K),\ M(\gamma) \leq B\}
	\end{equation*}
	is finite for every $B \geq 1$.
\end{lem}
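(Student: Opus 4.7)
The plan is to construct the triple $(\zeta,L,S)$ directly from the Galois theory of $\gamma$ over $K$. Since $\gamma^r\in K$ for some $r\in\nat$, every element of $\gal(\alg/K)$ fixes $\gamma^r$, and therefore each conjugate of $\gamma$ over $K$ has the form $\gamma\omega$ with $\omega^r=1$. Setting $L=[K(\gamma):K]$ and listing these conjugates as $\gamma=\gamma_1,\gamma_2,\ldots,\gamma_L$, I would take
\[
    \zeta=\prod_{i=1}^L(\gamma_i/\gamma)\qquad\text{and}\qquad\delta=\zeta\gamma^L=\prod_{i=1}^L\gamma_i.
\]
Then $\zeta$ is a root of unity and $\delta$ is (up to sign) the constant term of the minimal polynomial of $\gamma$ over $K$; in particular $\delta\in K$, and Lemma \ref{Contained} \eqref{ContainedEq} sharpens this to $\delta\in F$, where $F:=K\cap\rat(\gamma)$.

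Next I would set $S=[F:\rat(\delta)]$, which is automatically a positive integer because $\rat(\delta)\subseteq F$. The other half of Lemma \ref{Contained}, namely \eqref{EqualDegrees}, yields $L=[\rat(\gamma):F]$, so the tower $\rat\subseteq\rat(\delta)\subseteq F\subseteq\rat(\gamma)$ gives
\[
    \deg(\gamma)=L\cdot[F:\rat]=L\cdot S\cdot\deg(\delta).
\]
Using \eqref{HeightRoot} to absorb $\zeta$ and \eqref{HeightInteger} to pull out the exponent $L$, one then computes
\[
    M(\delta)^S=H(\delta)^{S\deg(\delta)}=H(\gamma)^{LS\deg(\delta)}=H(\gamma)^{\deg(\gamma)}=M(\gamma),
\]
which finishes the first assertion.

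For the finiteness claim, given $\gamma\in\rad(K)$ with $M(\gamma)\leq B$, the first part supplies $\delta\in K$ with $M(\delta)\leq M(\delta)^S=M(\gamma)\leq B$, and since $\deg(\delta)\leq[K:\rat]$ this forces $H(\delta)\leq B$ as well. Northcott's theorem then bounds the number of such $\delta$, so only finitely many values of $M(\delta)$ arise; for each $M(\delta)>1$ the exponent $S$ is bounded above by $\log B/\log M(\delta)$, while $M(\delta)=1$ forces $M(\gamma)=1$. The main conceptual step, and the one place where the Galois hypothesis on $K$ does real work, is arranging that $\delta$ lands in $F=K\cap\rat(\gamma)$ rather than merely in $K$; without this refinement there is no reason for $S$ to be integral, and that is exactly what Lemma \ref{Contained} supplies.
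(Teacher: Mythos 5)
Your proposal is correct and follows essentially the same route as the paper: you construct the same $\zeta=\prod(\gamma_i/\gamma)$, the same $L=[K(\gamma):K]$, and the same $S=[K\cap\rat(\gamma):\rat(\delta)]$, and the heart of the argument is the same invocation of Lemma \ref{Contained} to place $\delta=\zeta\gamma^L$ in $K\cap\rat(\gamma)$ and to identify $L$ with $[\rat(\gamma):K\cap\rat(\gamma)]$. The only divergence is cosmetic and occurs in the finiteness step: the paper notes directly that $M(\gamma)=H((\zeta\gamma^L)^{[K\cap\rat(\gamma):\rat]})$ is itself the Weil height of an element of $K$, so Northcott applies in one stroke, whereas you first bound the finitely many admissible values of $M(\delta)$ and then separately bound $S$ by $\log B/\log M(\delta)$; both are valid, and the paper's version is marginally slicker.
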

\begin{proof}
	Suppose that $\gamma^r\in K$, so that each conjugate of $\gamma$ over $K$ must be a root of $x^r-\gamma^r\in K[x]$.  
	Therefore, we may assume that $\gamma$ has conjugates
	\begin{equation*}
		\zeta_1\gamma,\ldots,\zeta_L\gamma
	\end{equation*}
	over $K$ for some roots of unity $\zeta_1,\ldots,\zeta_L$.  By Lemma \ref{Contained} we conclude that
	\begin{equation} \label{InK}
		\zeta_1\cdots\zeta_L\gamma^L = \zeta_1\gamma\cdots\zeta_L\gamma \in K\cap \rat(\gamma).
	\end{equation}
	Since $K$ is Galois, Lemma \ref{Contained} also implies that $L = [K(\gamma):K] = [\rat(\gamma):K\cap\rat(\gamma)]$.  Hence, we find that
	\begin{align*}
		M(\gamma) & = H(\gamma)^{[\rat(\gamma):\rat]} \\
			& = H(\gamma)^{[\rat(\gamma):K\cap\rat(\gamma)]\cdot[K\cap\rat(\gamma):\rat]} \\
			& = H(\gamma)^{L\cdot[K\cap\rat(\gamma):\rat]}.
	\end{align*}
	Since $L$ is a positive integer and $\zeta_1\cdots\zeta_L$ is a root of unity, we conclude from \eqref{HeightRoot} and \eqref{HeightInteger} that
	\begin{equation} \label{AlmostMM}
		M(\gamma) = H(\zeta_1\cdots\zeta_L\gamma^L)^{[K\cap\rat(\gamma):\rat]}.
	\end{equation}
	By \eqref{InK} we know that there exists a positive integer $S$ such that
	\begin{equation*}
		[K\cap\rat(\gamma):\rat] = S\cdot [\rat(\zeta_1\cdots\zeta_L\gamma^L):\rat]
	\end{equation*}
	and so \eqref{AlmostMM} yields
	\begin{equation*}
		M(\gamma) = H(\zeta_1\cdots\zeta_L\gamma^L)^{S\cdot [\rat(\zeta_1\cdots\zeta_L\gamma^L):\rat]} = M(\zeta_1\cdots\zeta_L\gamma^L)^S.
	\end{equation*}
	Taking $\zeta = \zeta_1\cdots\zeta_L$ we have that $\zeta\gamma^L\in K$ by \eqref{InK} and $M(\gamma) = M(\zeta\gamma^L)^S$ which 
	establishes the first statement of the lemma.
	
	Further, we note that \eqref{AlmostMM} implies that
	\begin{equation*}
		M(\gamma) = H((\zeta\gamma^L)^{[K\cap\rat(\gamma):\rat]}).
	\end{equation*}
	But $(\zeta\gamma^L)^{[K\cap\rat(\gamma):\rat]} \in K$ implying that
	\begin{equation} \label{SetContain}
		\{M(\gamma):\gamma\in\rad(K),\ M(\gamma) \leq B\} \subseteq \{H(\alpha):\alpha\in K^\times,\ H(\alpha) \leq B\}.
	\end{equation}
	It follows from Northcott's Theorem \cite{Northcott} that the right hand side of \eqref{SetContain} is finite, completing the proof.
\end{proof}

The proof of Theorem \ref{AchievedToo} is somewhat simpler than that of Theorem \ref{Achieved} so we include it here first.

\smallskip\smallskip

\noindent{\it Proof of Theorem \ref{AchievedToo}.}
	There exists $\varepsilon >0$ such that if $\alpha = \alpha_1\cdots\alpha_N$ with $\alpha_n\in\rad(K_\alpha)$ and
	\begin{equation*}
		M_\infty(\alpha) \leq \max\{M(\alpha_1),\ldots,M(\alpha_N)\} \leq M_\infty(\alpha) +\varepsilon
	\end{equation*}
	then $M_\infty(\alpha) = \max\{M(\alpha_1),\ldots,M(\alpha_N)\}$.  Otherwise, we get a sequence $\{x_m\}\subseteq\rad(K_\alpha)$
	such that $\{M(x_m)\}$ is strictly decreasing, contradicting Lemma \ref{HeightInK}.

	By definition, there exists a representation $\alpha = \gamma_1\cdots\gamma_N$ with 
	\begin{equation*}
		M_\infty(\alpha) \leq \max\{M(\gamma_1),\ldots,M(\gamma_N)\} \leq M_\infty(\alpha) +\varepsilon.
	\end{equation*}
	By Theorem \ref{Reduction}, there exists a representation $\alpha = \zeta\alpha_1\cdots\alpha_N$ such that $\zeta$ is a root of unity, $\alpha_n\in\rad(K_\alpha)$ and
	and $M(\alpha_n) \leq M(\gamma_n)$ for all $n$.  This yields
	\begin{equation*}
		M_\infty(\alpha) \leq \max\{M(\alpha_1),\ldots,M(\alpha_N)\} \leq M_\infty(\alpha) +\varepsilon
	\end{equation*}
	so that $M_\infty(\alpha) = \max\{M(\alpha_1),\ldots,M(\alpha_N)\}$ by our earlier remarks.\qed
	
\smallskip\smallskip

We note that the above proof is not sufficient to establish Theorem \ref{Achieved}.  Indeed, Lemma \ref{HeightInK} does not prevent the product 
$M(\alpha_1)\cdots M(\alpha_N)$ from having infinitely many values between $M_1(\alpha)$ and $M_1(\alpha) + \varepsilon$ unless we can bound $N$ uniformly 
from above by a function of $\alpha$.  

In order to do this, we introduce an additional definition.  We say that a representation $\alpha = \alpha_1\cdots\alpha_N$ is 
{\it $B$-restricted} if the following three conditions hold.
\begin{enumerate}[(i)]
	\item $M(\alpha_1)\cdots M(\alpha_N) \leq B$
	\item $\alpha_n\in\rad(K_\alpha)$ for all $n$
	\item At most one element $\alpha_n$ is a root of unity.
\end{enumerate}
We write $R_B(\alpha)$ to denote the set of all $N$-tuples, for all $N\in\nat$, of non-zero algebraic numbers 
that form $B$-restricted representations of $\alpha$.  Further, set 
\begin{equation*}
	q(\alpha) = \inf\left\{H(x):x\in K_\alpha^\times\setminus\tor(\algt)\right\}
\end{equation*}
and note that, by Northcott's Theorem \cite{Northcott}, this quantity is always strictly greater than $1$.
Using these definitions, we obtain the result we need to finish the proof of Theorem \ref{Achieved}.

\begin{lem} \label{FinitelyMany}
	Let $\alpha$ be a non-zero algebraic number and $B\geq 1$.  If $\alpha = \alpha_1\cdots\alpha_N$ is an $B$-restricted representation of $\alpha$ then
	\begin{equation*}
		N \leq 1 + \frac{\log B}{\log q(\alpha)}.
	\end{equation*}
	Moreover, the set
	\begin{equation*}
		\left\{\prod_{n=1}^N M(\alpha_n): N\in\nat,\ (\alpha_1,\ldots,\alpha_N)\in R_B(\alpha)\right\}
	\end{equation*}
	is finite.
\end{lem}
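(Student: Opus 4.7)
The plan is to extract a uniform positive lower bound on $M(\gamma)$ for non-torsion $\gamma\in\rad(K_\alpha)$, use it together with condition (iii) to bound $N$, and then combine the bound on $N$ with the finiteness statement already proven in Lemma \ref{HeightInK} to obtain the finiteness of the set of products.

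First I would prove the key inequality: if $\gamma\in\rad(K_\alpha)$ is not a root of unity, then $M(\gamma)\geq q(\alpha)$. Apply Lemma \ref{HeightInK} to $\gamma$ (with $K=K_\alpha$, which is Galois) to produce a root of unity $\zeta$ and positive integers $L,S$ with $\zeta\gamma^L\in K_\alpha$ and $M(\gamma)=M(\zeta\gamma^L)^S$. Since $\gamma$ is not a root of unity, neither is $\zeta\gamma^L$, so $\zeta\gamma^L\in K_\alpha^\times\setminus\tor(\algt)$ and therefore $H(\zeta\gamma^L)\geq q(\alpha)$ by definition of $q(\alpha)$. Because $M(\zeta\gamma^L)=H(\zeta\gamma^L)^{[\rat(\zeta\gamma^L):\rat]}\geq H(\zeta\gamma^L)$ and $S\geq 1$, we conclude $M(\gamma)\geq q(\alpha)$.

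Next I would apply condition (iii) of the $B$-restricted definition. In a $B$-restricted representation $\alpha=\alpha_1\cdots\alpha_N$, at most one $\alpha_n$ is a root of unity, so at least $N-1$ of the factors satisfy $M(\alpha_n)\geq q(\alpha)$ by the previous paragraph, while the remaining factor has $M\geq 1$. Multiplying and using condition (i) gives $q(\alpha)^{N-1}\leq \prod_n M(\alpha_n)\leq B$, so taking logarithms yields $N\leq 1+\log B/\log q(\alpha)$. (Note that $\log q(\alpha)>0$ by Northcott's theorem, which is what makes the bound meaningful.)

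For the second claim, observe that each $M(\alpha_n)\leq B$ in a $B$-restricted representation, and by condition (ii) each $\alpha_n$ lies in $\rad(K_\alpha)$. By the finiteness portion of Lemma \ref{HeightInK}, the set $T=\{M(\gamma):\gamma\in\rad(K_\alpha),\ M(\gamma)\leq B\}$ is finite. Since $N$ is bounded above by a constant depending only on $\alpha$ and $B$, every product $\prod_{n=1}^N M(\alpha_n)$ arising from a $B$-restricted representation is a product of at most a fixed number of elements drawn from the finite set $T$, and hence lies in a finite set. I do not anticipate a real obstacle here; the only subtle point worth emphasizing is the role of condition (iii), without which one could inflate $N$ arbitrarily by inserting root-of-unity factors (which have Mahler measure $1$) and the bound on $N$ would fail.
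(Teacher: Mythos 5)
Your proof is correct and follows essentially the same approach as the paper: establish $M(\alpha_n)\geq q(\alpha)$ for the non-torsion factors via Lemma \ref{HeightInK}, use condition (iii) to count at least $N-1$ such factors, bound $N$ from the product bound in (i), and then invoke the finiteness part of Lemma \ref{HeightInK}. The only minor difference is that the paper derives $M(\alpha_n)\geq q(\alpha)$ from the auxiliary fact (established inside the proof of Lemma \ref{HeightInK}, not its statement) that $M(\gamma)=H(\gamma')$ for some $\gamma'\in K_\alpha$, whereas you reach the same bound using only the stated conclusion $M(\gamma)=M(\zeta\gamma^L)^S$ together with $M(\zeta\gamma^L)\geq H(\zeta\gamma^L)$ and $S\geq 1$; your version is arguably cleaner because it relies only on what Lemma \ref{HeightInK} actually asserts.
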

\begin{proof}
	Suppose that $\alpha = \alpha_1\cdots\alpha_N$ is an $B$-restricted representation.  By assumption, at least $N-1$ of the terms $\alpha_n$ in our representation are 
	not roots of unity.  Assume $\alpha_n$ is one such element.  Lemma \ref{HeightInK} implies that there exists a point $\gamma_n\in K_\alpha$ such that 
	\begin{equation} \label{FromK}
		M(\alpha_n) = H(\gamma_n).
	\end{equation}
	Since $\alpha_n$ is not a root of unity, neither side of \eqref{FromK} equals $1$, so that $\gamma_n$ is not a root of unity either.  Therefore,
	we find that $M(\alpha_n) \geq q(\alpha)$ for $N-1$ of the terms belonging to $\{\alpha_1,\ldots,\alpha_N\}$.  This yields
	\begin{equation*}
		B \geq M(\alpha_1)\cdots M(\alpha_N) \geq q(\alpha)^{N-1}.
	\end{equation*}
	We know that $q(\alpha) > 1$ so that we may divide by $\log q(\alpha)$ to obtain
	\begin{equation} \label{NumberBound}
		N \leq 1 + \frac{\log B}{\log q(\alpha)}
	\end{equation}
	verifying the first statement of the lemma.  We now find that
	\begin{align*}
		\left\{ \prod_{n=1}^N M(\alpha_n):\right. &\  N\in\nat,\ (\alpha_1,\ldots,\alpha_N)\in R_B(\alpha) \Bigg\} \\
			& = \left\{ \prod_{n=1}^N M(\alpha_n):\ (\alpha_1,\ldots,\alpha_N)\in R_B(\alpha),\ N \leq 1 + \frac{\log B}{\log q(\alpha)}\right\} \\
			& \subseteq \left\{ \prod_{n=1}^N M(\alpha_n):\ N \leq 1 + \frac{\log B}{\log q(\alpha)},\ M(\alpha_n)\leq B,\ \alpha_n\in\rad(K_\alpha)\right\}
	\end{align*}
	which is finite	by Lemma \ref{HeightInK}.
\end{proof}

{\it Proof of Theorem \ref{Achieved}.}
By Lemma \ref{FinitelyMany}, we may select $B > M_1(\alpha)$ such that
\begin{equation*}
	(M_1(\alpha),B)\ \bigcap\ \left\{\prod_{n=1}^N M(\alpha_n): N\in\nat,\ (\alpha_1,\ldots,\alpha_N)\in R_{M_1(\alpha) +1}(\alpha)\right\} = \emptyset.
\end{equation*}
Of course, we may choose $B\leq M_1(\alpha) + 1$ which gives
\begin{align*}
	\left\{\prod_{n=1}^N M(\alpha_n):\right. & N\in\nat,\ (\alpha_1,\ldots,\alpha_N)\in R_B(\alpha)\Bigg\} \\
	& \subseteq \left\{\prod_{n=1}^NM(\alpha_n): N\in\nat,\ (\alpha_1,\ldots,\alpha_N)\in R_{M_1(\alpha) +1}(\alpha)\right\},
\end{align*}
and therefore,
\begin{equation} \label{EmptyIntersection}
	(M_1(\alpha),B)\ \bigcap\ \left\{\prod_{n=1}^N M(\alpha_n): N\in\nat,\ (\alpha_1,\ldots,\alpha_N)\in R_B(\alpha)\right\} = \emptyset.
\end{equation}
By definition of $M_1$, there exists a representation $\alpha = \gamma_1\cdots\gamma_L$ such that
\begin{equation*}
	M_1(\alpha) \leq M(\gamma_1)\cdots M(\gamma_L) < B.
\end{equation*}
Theorem \ref{Reduction} implies that there exists a representation $\alpha = \zeta\beta_1\cdots\beta_L$ with $\zeta$ a root of unity,
each element $\beta_\ell$ belonging to $\rad(K_\alpha)$ and $M(\beta_\ell) \leq M(\gamma_\ell)$ for all $\ell$.  This yields
\begin{equation*}
	M_1(\alpha) \leq M(\zeta)M(\beta_1)\cdots M(\beta_L) < B.
\end{equation*}
By combining all roots of unity in the representation into a single element, we obtain a new representation $\alpha = \alpha_1\cdots\alpha_N$ having
$\alpha_n\in\rad (K_\alpha)$, at most one root of unity, and
\begin{equation*}
	M(\alpha_1)\cdots M(\alpha_N) = M(\beta_1)\cdots M(\beta_L).
\end{equation*}
Therefore, we see that
\begin{equation} \label{NiceRep}
	M_1(\alpha) \leq M(\alpha_1)\cdots M(\alpha_N) < B
\end{equation}
which implies, in particular, that $(\alpha_1,\ldots,\alpha_N) \in R_B(\alpha)$.  Then by \eqref{EmptyIntersection} we get that
\begin{equation} \label{PushingOut}
	M(\alpha_1)\cdots M(\alpha_N) \not\in (M_1(\alpha),B).
\end{equation}
Finally, combining \eqref{NiceRep} and \eqref{PushingOut} we obtain $M_1(\alpha) = M(\alpha_1)\cdots M(\alpha_N)$.
\qed

\section{The location of $M_1(\alpha)$ and $M_\infty(\alpha)$} \label{Applications}

We now apply Theorems \ref{Achieved} and \ref{AchievedToo} in order to show that $M_1(\alpha)$ and $M_\infty(\alpha)$ belong to $K_\alpha$.
We begin with $M_\infty$ in which case we are able to prove a slightly stronger result.

\begin{thm} \label{StrongLocation}
	If $\alpha$ is an algebraic number then there exists $\beta\in K_\alpha$ such that $M_\infty(\alpha) = M(\beta)$.
	In particular, $M_\infty(\alpha) \in K_\alpha$.
\end{thm}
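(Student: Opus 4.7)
The plan is to combine Theorem~\ref{AchievedToo} with Lemma~\ref{HeightInK}. First, I apply Theorem~\ref{AchievedToo} to obtain a representation $\alpha = \alpha_1 \cdots \alpha_N$ with each $\alpha_n \in \rad(K_\alpha)$ and $M_\infty(\alpha) = \max_n M(\alpha_n)$; after relabeling, I may assume that the maximum is realized at $n = 1$, so that $M(\alpha_1) = M_\infty(\alpha)$.

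Next, I apply Lemma~\ref{HeightInK} with $K = K_\alpha$ to the element $\alpha_1 \in \rad(K_\alpha)$. This produces a root of unity $\zeta$ and positive integers $L, S$ such that $\delta := \zeta \alpha_1^L$ lies in $K_\alpha$ and $M(\alpha_1) = M(\delta)^S$. Hence $M_\infty(\alpha) = M(\delta)^S$ with $\delta \in K_\alpha$.

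The third step is to set $\beta = \delta^S \in K_\alpha$ and to show $M(\beta) = M_\infty(\alpha)$. Writing $M(\xi) = H(\xi)^{[\rat(\xi):\rat]}$ for any nonzero algebraic $\xi$, one has
\begin{equation*}
M(\delta^S) = H(\delta)^{S \cdot [\rat(\delta^S):\rat]} \quad \text{and} \quad M(\delta)^S = H(\delta)^{S \cdot [\rat(\delta):\rat]},
\end{equation*}
so the desired equality reduces to the field identity $\rat(\delta^S) = \rat(\delta)$. The hardest part of the argument will be verifying this identity. My plan is to exploit the explicit description of $\delta$ from the proof of Lemma~\ref{HeightInK}---namely that $\delta$ equals the product of the $L$ Galois conjugates of $\alpha_1$ over $K_\alpha$---to argue that no two distinct Galois conjugates of $\delta$ over $\rat$ can differ by a nontrivial $S$-th root of unity, which is precisely what prevents any degree drop upon raising $\delta$ to the $S$-th power.

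Once $M_\infty(\alpha) = M(\beta)$ with $\beta \in K_\alpha$ has been established, the ``in particular'' containment $M_\infty(\alpha) \in K_\alpha$ will follow from a general property of Mahler measures in Galois number fields. Indeed, for any $\beta \in K_\alpha$ one can write $M(\beta) = |a_n \cdot \prod_{|\sigma(\beta)|>1} \sigma(\beta)|$, where $\sigma$ ranges over the embeddings of $\rat(\beta)$ into $\com$ and $a_n$ is the leading coefficient of the minimal polynomial of $\beta$; since the indexing set is stable under complex conjugation, the inner product is real, and as a product of Galois conjugates of $\beta$ inside $K_\alpha$ it lies in $K_\alpha \cap \real$, so $M(\beta) \in K_\alpha$.
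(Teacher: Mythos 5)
Your argument has a genuine gap in the third step, and the strategy to close it will not succeed. You set $\beta = \delta^S$ and correctly observe that the desired identity $M(\delta^S) = M(\delta)^S$ is equivalent to $\rat(\delta^S) = \rat(\delta)$, but you do not prove this field identity, and it is not forced by the construction in Lemma~\ref{HeightInK}. The element $\delta$ produced there lies in $K_\alpha \cap \rat(\alpha_1)$ and the integer $S$ equals $[K_\alpha \cap \rat(\alpha_1) : \rat(\delta)]$; nothing in this structure prevents two distinct $\rat$-conjugates of $\delta$ from differing by a nontrivial $S$-th root of unity, so the degree may well drop when you pass to $\delta^S$. Your plan to rule this out by inspecting the Galois conjugates of $\delta$ is an unproven claim, not an argument, and if it were that simple one could drop the exponent $S$ from Lemma~\ref{HeightInK} entirely -- the lemma is stated with $S$ precisely because the degree loss is unavoidable in general.

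The paper avoids this obstacle by a different device. Rather than fix the single index at which the maximum is attained, it keeps all $n$ in play: with $L = \prod_n L_n$ and $J_n = \prod_{k \ne n} L_k$ it writes $\zeta \alpha^L = \prod_n (\zeta_n \alpha_n^{L_n})^{J_n}$, a representation of a power of $\alpha$ built entirely from elements of $K_\alpha$. It then invokes the invariance of $M_\infty$ under roots of unity and under raising to nonzero integer powers (Theorem 1.3 of \cite{FiliSamuels}) to obtain $M_\infty(\alpha) = M_\infty(\zeta\alpha^L) \le \max_n M(\zeta_n\alpha_n^{L_n}) \le \max_n M(\zeta_n\alpha_n^{L_n})^{S_n} = M_\infty(\alpha)$, forcing equality throughout. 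This sidesteps the exponent $S_n$ entirely; the inequality $M(\zeta_n\alpha_n^{L_n}) \le M(\zeta_n\alpha_n^{L_n})^{S_n}$ runs in exactly the helpful direction, whereas your route needs an equality that you cannot supply. Your closing paragraph establishing $M(\beta) \in K_\alpha$ once a suitable $\beta \in K_\alpha$ is found matches the paper's and is fine.
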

\begin{proof}
	By Theorem \ref{AchievedToo} there exist $\alpha_1,\ldots,\alpha_N\in\rad(K_\alpha)$ such that $\alpha = \alpha_1\cdots\alpha_N$ and
	\begin{equation*}
		M_\infty(\alpha) = \max\{M(\alpha_1),\ldots,M(\alpha_N)\}.
	\end{equation*}
	For each $n$, Lemma \ref{HeightInK} implies that there exists a root of unity $\zeta_n$ and $L_n,S_n\in\nat$ such that
	\begin{equation*}
		M(\alpha_n) = M(\zeta_n\alpha_n^{L_n})^{S_n}
	\end{equation*}
	and $\zeta_n\alpha_n^{L_n}\in K_\alpha$.  For simplicity, we write
	\begin{equation*}
		L = \prod_{n=1}^N L_n\quad\mathrm{and}\quad J_n = \prod_{k\ne n} L_k
	\end{equation*}
	so that $L = L_nJ_n$ for all $n$.  Then we obtain immediately
	\begin{equation*}
		\alpha^L = \prod_{n=1}^N \alpha_n^{L_nJ_n}
	\end{equation*}
	so there exists a root of unity $\zeta$ such that
	\begin{equation*}
		\zeta\alpha^L = \prod_{n=1}^N (\zeta_n\alpha_n^{L_n})^{J_n}.
	\end{equation*}
	By Theorem 1.3 of \cite{FiliSamuels} we obtain that
	\begin{align*}
		M_\infty(\alpha) & = M_\infty(\zeta\alpha^L) \\
			& \leq \max_{1\leq n\leq N}\{M(\zeta_n\alpha_n^{L_n})\} \\
			& \leq \max_{1\leq n\leq N}\{M(\zeta_n\alpha_n^{L_n})^{S_n}\} \\
			& = \max_{1\leq n\leq N}\{M(\alpha_n)\} \\
			& = M_\infty(\alpha).
	\end{align*}
	Therefore, we have that $M_\infty(\alpha) = \max_{1\leq n\leq N}\{M(\zeta_n\alpha_n^{L_n})\}$.  As we have noted,
	each element $\zeta_n\alpha_n^{L_n}$ belongs to $K_\alpha$ completing the proof of the first statement.
	
	Now we have that $M_\infty(\alpha) = M(\beta)$ for some $\beta\in K_\alpha$.  Since $K_\alpha$ is Galois, it must contain
	all conjugates of $\beta$ over $\rat$, and therefore, it contains the product of all roots outside the unit circle.  This
	product is a real number so $K_\alpha$ must contain its absolute value.  Hence we get that $M_\infty(\alpha)\in K_\alpha$.
\end{proof}

In the case of $M_1$, we cannot establish a result as strong as Theorem \ref{StrongLocation}, but we can prove an analog of its
second statement.

\begin{thm} \label{Location}
	If $\alpha$ is an algebraic number then $M_1(\alpha) \in K_\alpha$.
\end{thm}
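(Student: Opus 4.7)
The plan is a direct concatenation of Theorem \ref{Achieved}, Lemma \ref{HeightInK}, and the Galois-conjugation argument from the second paragraph of the proof of Theorem \ref{StrongLocation}. First, I would invoke Theorem \ref{Achieved} to obtain a representation $\alpha = \alpha_1 \cdots \alpha_N$ with $\alpha_n \in \rad(K_\alpha)$ for which $M_1(\alpha) = M(\alpha_1)\cdots M(\alpha_N)$. Applying Lemma \ref{HeightInK} to each factor then yields roots of unity $\zeta_n$ and positive integers $L_n, S_n$ such that $\beta_n := \zeta_n \alpha_n^{L_n}$ lies in $K_\alpha$ and $M(\alpha_n) = M(\beta_n)^{S_n}$. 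Substituting these back rewrites the metric Mahler measure as
\begin{equation*}
	M_1(\alpha) = \prod_{n=1}^N M(\beta_n)^{S_n},
\end{equation*}
a product of Mahler measures of elements of $K_\alpha$. Since $K_\alpha$ is a field, the problem reduces to showing that $M(\beta) \in K_\alpha$ whenever $\beta\in K_\alpha$.

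The key step is this sub-claim, which I would establish by reusing the argument at the end of the proof of Theorem \ref{StrongLocation}. Since $K_\alpha$ is Galois over $\rat$, every conjugate of $\beta$ over $\rat$ lies in $K_\alpha$, and hence so does the product $\pi$ of those conjugates of absolute value greater than $1$. Complex conjugation permutes the conjugates of $\beta$ and preserves their absolute values, so the set of conjugates outside the unit circle is stable under complex conjugation, forcing $\pi$ to be real. Therefore $|\pi| = \pm\pi \in K_\alpha$, and combining this with the fact that the integer leading coefficient $|A|$ of the minimal polynomial of $\beta$ lies in $\rat \subseteq K_\alpha$, formula \eqref{SecondMahlerDefinition} gives $M(\beta) = |A|\cdot|\pi| \in K_\alpha$.

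Since $K_\alpha$ is a field, applying this sub-claim to each $\beta_n$ yields $M_1(\alpha) \in K_\alpha$. I do not expect any serious obstacle; the proof is essentially bookkeeping around two results that have already been proved, and is structurally parallel to that of Theorem \ref{StrongLocation}. The reason we cannot upgrade the conclusion to match the first (stronger) statement of Theorem \ref{StrongLocation}---namely the existence of a single $\beta \in K_\alpha$ with $M_1(\alpha) = M(\beta)$---is that $M_1(\alpha)$ naturally decomposes as an honest product $\prod_n M(\beta_n)^{S_n}$ of several Mahler measures, and there is no apparent way to collapse this product into the Mahler measure of a single element of $K_\alpha$.
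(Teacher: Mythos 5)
Your proof is correct and follows essentially the same route as the paper: invoke Theorem~\ref{Achieved}, apply Lemma~\ref{HeightInK} to each factor to land in $K_\alpha$, and then use the Galois/complex-conjugation argument (the same one appearing in the proof of Theorem~\ref{StrongLocation}) to conclude $M(\beta)\in K_\alpha$ for $\beta\in K_\alpha$. The paper states this last step more tersely; you simply spell out the details, including why the product of conjugates outside the unit circle is real and why $|A|\in\rat\subseteq K_\alpha$.
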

\begin{proof}
	By Theorem \ref{Achieved}, we know that there exist $\alpha_1,\ldots,\alpha_N\in\rad(K_\alpha)$ such that $\alpha = \alpha_1\cdots\alpha_N$ and
	\begin{equation*}
		M_1(\alpha) = M(\alpha_1)\cdots M(\alpha_N).
	\end{equation*}
	According to Lemma \ref{HeightInK}, for each $n$ there exists an algebraic number $\gamma_n\in K_\alpha$ and a positive integer $S_n$ such that
	$M(\alpha_n) = M(\gamma_n)^{S_n}$.  Each conjugate of $\gamma$ over $\rat$ must belong to the Galois extension $K_\alpha$, which implies
	that $M(\gamma_n)\in K_\alpha$ for all $n$.  It follows that $M_1(\alpha)\in K_\alpha$.
\end{proof}

\comment{
\section{An alternate proof of Theorem \ref{Achieved}}

Although Lemma \ref{FinitelyMany} is sufficient to prove Theorems \ref{Achieved} and \ref{AchievedToo}, it shows only that there are only finitely
many values of $M(\alpha_1)\cdots M(\alpha_N)$ for $(\alpha_1,\cdots,\alpha_N)\in R_M(\alpha)$.  We would obtain a somewhat stronger result
if we could show that, in fact, $R_M(\alpha)$ is a finite set.  Unfortunately, this is not true in general.  For instance, take $M$ such that
$\sqrt{M/M(\alpha)} \geq 2$ so that
\begin{equation*}
	M(2^{1/n}) \leq \sqrt{M/M(\alpha)}
\end{equation*}
for all positive integers $n$.  In this case, we have that
\begin{equation*}
	M(\alpha)M(2^{1/n})M(2^{-1/n}) \leq M(\alpha)\cdot\sqrt{M/M(\alpha)}\cdot\sqrt{M/M(\alpha)} = M
\end{equation*}
so that the representation $\alpha = \alpha\cdot 2^{1/n}\cdot 2^{-1/n}$ belongs to $R_M(\alpha)$.  Letting $n$ tend to $\infty$,
we see that $R_M(\alpha)$ must be infinite.

We can, however, restrict our attention to a slightly smaller set than $R_M(\alpha)$.  Borrowing a definition from \cite{DubSmyth}, we say that a multiset 
$\alpha_1,\ldots,\alpha_N$ of non-zero algebaic numbers is {\it $\mu$-reducible} if
\begin{equation*}
	M(\alpha_1\cdots\alpha_N) \leq M(\alpha_1)\cdots M(\alpha_N).
\end{equation*}
Otherwise, we say that the multiset is {\it $\mu$-irreducible}.  A representation of $\alpha$ by a product $\alpha = \alpha_1\cdots\alpha_N$ is said to be
{\it $\mu$-reducible} if {\bf at least one} submultiset of $\alpha_1,\ldots,\alpha_N$ containing at least two elements is $\mu$-reducible.  This respresentation 
is {\it $\mu$-irreducible} if {\bf every} submultiset of $\alpha_1,\ldots,\alpha_N$ with at least two elements is $\mu$-irreducible.  

We note that we need only consider $\mu$-irreducible represenations of $\alpha$ when computing $M_1(\alpha)$.  Further, if $\alpha_1,\ldots,\alpha_N$ is a $\mu$-reducible
representation of $\alpha$, then the process of combining $\mu$-reducible submultisets into single elements must eventually result in a $\mu$-irreducible representation.
In other words, since $\alpha_1,\ldots,\alpha_N$ is finite, the process of combining $\mu$-reducible submultisets must eventually terminate.

We say that a representation $\alpha = \alpha_1\cdots\alpha_N$ is {\it strongly $M$-restricted} if it has the following three conditions.
\begin{enumerate}[(i)]
	\item $M(\alpha_1)\cdots M(\alpha_N) \leq M$
	\item $\alpha_n\in\rad(K_\alpha)$ for all $n$
	\item $\alpha = \alpha_1\cdots\alpha_N$ is $\mu$-irreducible.
\end{enumerate}
Further write $SR_M(\alpha)$ to denote the set of $N$-tuples, for all $N\in\nat$, that yield strongly $M$-restricted representations of $\alpha$.  By definition,
if $\alpha = \alpha_1\cdots\alpha_N$ is $\mu$-irreducible then it must contain at most one root of unity meaning that
\begin{equation} \label{RestrictedSubset}
	SR_M(\alpha) \subseteq R_M(\alpha).
\end{equation}
As we have noted, the right hand side of \eqref{RestrictedSubset} is infinite, however we will find that the left hand side is finite.

\begin{thm} \label{RestrictedFinite}
 If $\alpha$ is a non-zero algebraic number and $M\geq 1$ then $SR_M(\alpha)$ is finite.
\end{thm}

This result, along with Theorem \ref{Reduction}, is enough to prove Theorem \ref{Achieved}.  Indeed, if $\alpha = \alpha_1\cdots\alpha_N$ is any representation,
then Theorem \ref{Reduction} shows that we may replace this representation with a smaller one containing only points in $\rad(K_\alpha)$.  The resulting representation
may not be $\mu$-irreducible, but as noted above, we may combine elements until it becomes $\mu$-irreducible.  This process preserve the property that 
each point belong to $\rad(K_\alpha)$.  

For our proof of Theorem \ref{RestrictedFinite}, it will be useful record an observation about $\mu$-irreducible representations noted in \cite{DubSmyth}.  
We include the proof as well for the sake of completeness.

\begin{lem} \label{MaxDegree}
	If $\alpha = \alpha_1\cdots\alpha_N$ is a $\mu$-irreducible representation with $\deg\alpha_1 \leq \cdots\leq \deg\alpha_N$ then
	$\deg\alpha_n \leq (\deg\alpha)^{2^{n-1}}$.  In particular, $\deg\alpha_n \leq (\deg\alpha)^{2^{N-1}}$.
\end{lem}
\begin{proof}
	We will use induction on $n$.  First take $n=1$ and assume that $\deg(\alpha_1) > (\deg\alpha)^{2^{1-1}} = \deg\alpha$.  Therefore, we have that
	\begin{equation*}
		\deg\alpha < \deg\alpha_1 \leq \cdots\leq \deg\alpha_N.
	\end{equation*}
	This implies that 
	\begin{align*}
		M(\alpha_1)\cdots M(\alpha_N) & = H(\alpha_1)^{\deg\alpha_1}\cdots H(\alpha_N)^{\deg\alpha_N}\\
			& \geq (H(\alpha_1)\cdots H(\alpha_N))^{\deg\alpha} \\
			& \geq H(\alpha)^{\deg\alpha}
	\end{align*}
	where the last inequality follows from the triangle inequality for the Weil height.  This means that $M(\alpha_1)\cdots M(\alpha_N) \geq M(\alpha)$
	which contradicts that $\alpha = \alpha_1\cdots\alpha_N$ is $\mu$-irreducible.  Hence, $\deg(\alpha_1) \leq \deg\alpha$.
	
	Now assume that $\deg\alpha_i \leq (\deg\alpha)^{2^{i-1}}$ for all $i = 1,2,\ldots, n$.  Of course, this implies that $\deg(\alpha_i^{-1}) \leq (\deg\alpha)^{2^{i-1}}$.
	Using the triangle inequality for the degree we have
	\begin{align*}
		\deg(\alpha\alpha_1^{-1}\cdots\alpha_n^{-1}) & \leq (\deg\alpha)(\deg\alpha)^{2^0}(\deg\alpha)^{2^1}\cdots(\deg\alpha)^{2^{n-1}} \\
			& = (\deg\alpha)^{1 + \sum_{i=0}^{n-1}2^i} \\
			& = (\deg\alpha)^{2^n}
	\end{align*}
	so that
	\begin{equation} \label{MultiDegreeBound}
		\deg(\alpha_{n+1}\cdots\alpha_N) \leq (\deg\alpha)^{2^n}.
	\end{equation}
	If $n+1=N$ this already completes our proof.  Otherwise, assume that $(\deg\alpha)^{2^n} \leq \deg\alpha_{n+1}$ and note that by \eqref{MultiDegreeBound}
	\begin{align*}
		M(\alpha_{n+1}\cdots\alpha_N) & \leq H(\alpha_{n+1}\cdots\alpha_N)^{(\deg\alpha)^{2^n}} \\
			& = H(\alpha_{n+1})^{\deg\alpha_{n+1}}\cdots H(\alpha_N)^{\deg\alpha_N}\\
			& = M(\alpha_{n+1})\cdots M(\alpha_N)
	\end{align*}
	which is again a contradiction.  Thus $(\deg\alpha)^{2^n} > \deg\alpha_{n+1}$ completing the proof.
\end{proof}

{\it Proof of Theorem \ref{RestrictedFinite}}.
	Suppose that $\alpha = \alpha_1\cdots\alpha_N$ is a $\mu$-irreducible representation satisfying \eqref{Bounded} as well as property \eqref{NotTooBad} 
	of Theorem \ref{Reduction}.  By Lemma \ref{MaxDegree} we must have that
	\begin{equation} \label{DegreeBound3}
		\deg(\alpha_n) \leq (\deg\alpha)^{2^{N-1}}
	\end{equation}
	for all $n$.
	Also, since $\alpha = \alpha_1\cdots\alpha_N$ is $\mu$-irreducible, we may assume without loss of generality that at most one term $\alpha_n$ is a 
	root of unity.
	
	Suppose that $E/K_\alpha$ is Galois containing all points $\alpha_1,\ldots,\alpha_N$.  We know, since each $\alpha_n$ has a positive integer 
	power in $K_\alpha$, that either
	\begin{enumerate}[(i)]
		\item\label{BoringCase} $\alpha_n$ is a root of unity, or
		\item\label{Rootof1} $\norm_{E/K_\alpha}(\alpha_n)$ is not a root of unity.
	\end{enumerate}
	To see this, write $y=\alpha_n$.  Also assume that $\norm_{E/K_\alpha}(y)$ is a root of unity so that there exists a positive integer $a$ such that
	$(\norm_{E/K_\alpha}(y))^a = 1$.  By assumption, we have that $y^r\in K_\alpha$ for some positive integer $r$ implying that $y$ is a root of $x^y-y^r\in K_\alpha[x]$.
	Hence, the conjugates of $y$ over $K_\alpha$ have the form
	\begin{equation*}
		\zeta_1 y,\ldots,\zeta_\ell y
	\end{equation*}
	for some roots of unity $\zeta_1,\ldots,\zeta_\ell$.  Also, by the definition of the Norm, there exists a positive integer $b$ such that
	$\norm_{E/K_\alpha}(y) = (\zeta_1 y\cdots\zeta_\ell y)^b$.  Therefore,
	\begin{equation*}
		1 = (\norm_{E/K_\alpha}(y))^a = (\zeta_1 y\cdots\zeta_\ell y)^{ab} = (\zeta_1\cdots\zeta_\ell)^{ab}y^{ab\ell}.
	\end{equation*}
	This means that $y^{ab\ell}$ is a root of unity so that $y$ must be a root of unity as well.  This verifies that either condition \eqref{BoringCase}
	or \eqref{Rootof1} must hold.
	
	As we have noted, we may assume that at least $N-1$ of the terms $\alpha_n$ in our representation satisfy \eqref{Rootof1}.  Assume that $\gamma\in\{\alpha_n\}$
	is one such element and let $\gamma_1,\ldots,\gamma_M$ be the conjugates of $\gamma$ over $K_\alpha$.  It follows from basic properties of the Weil height that
	\begin{equation*}
		M(\gamma) \geq H(\gamma)^{[K_\alpha(\gamma):K_\alpha]} = \prod_{m=1}^M H(\gamma_m) \geq H(\gamma_1\cdots\gamma_M).
	\end{equation*}
	But $\gamma_1\cdots\gamma_M$ belongs to $K_\alpha$ and cannot be a root of unity since $\norm_{E/K_\alpha}(\gamma)$ is not a root of unity.  Then setting
	\begin{equation*}
		q(\alpha) = \inf\left\{H(x):x\in K_\alpha^\times\setminus\tor(\algt)\right\}
	\end{equation*}
	we find that $M(\alpha_n) \geq q(\alpha)$ for $N-1$ of the terms belonging to $\{\alpha_n\}$.  This yields
	\begin{equation*}
		M \geq M(\alpha_1)\cdots M(\alpha_N) \geq q(\alpha)^{N-1}.
	\end{equation*}
	By Northcott's Theorem \cite{Northcott}, we know that $q(\alpha) > 1$ so that we may divide by $\log q(\alpha)$ to obtain
	\begin{equation} \label{NumberBound}
		N \leq 1 + \frac{\log M}{\log q(\alpha)}.
	\end{equation}
	Furthermore, \eqref{DegreeBound3} and \eqref{NumberBound} imply that
	\begin{equation} \label{DegreeBound4}
		\deg\alpha_n \leq (\deg\alpha)^{2^{\frac{\log M}{\log q(\alpha)}}}.
	\end{equation}
	Of course, each term $\alpha_n$ must also satisfy
	\begin{equation} \label{MeasureBound}
		M(\alpha_n) \leq M.
	\end{equation}
	It is well-known that there are only finitely many points $\gamma\in\algt$ having Mahler meausre bounded above by $M$ and
	degree bounded above by the right hand side of \eqref{DegreeBound4}.  According to \eqref{NumberBound}, there are at most
	$1 + (\log M)/(\log q(\alpha))$ terms that can appear in our representation so the result follows.\qed
	
	\smallskip\smallskip

\noindent{\it Alternate proof of Theorem \ref{Achieved}}.
By Theorem \ref{FinitelyMany}, we may select $\varepsilon > 0$ such that if $\alpha = \beta_1\cdots\beta_N$ is a $\mu$-irreducible representation
satisfying property $\eqref{NotTooBad}$ of Theorem \ref{Reduction} and
\begin{equation*}
	M_1(\alpha) \leq M(\beta_1)\cdots M(\beta_N) \leq M_1(\alpha) + \varepsilon
\end{equation*}	
then $M_1(\alpha) = M(\beta_1)\cdots M(\beta_N)$.

By definition of $M_1$, there exists a representation $\alpha = \alpha_1\cdots\alpha_M$ such that
\begin{equation*}
	M_1(\alpha) \leq M(\alpha_1)\cdots M(\alpha_M) \leq M_1(\alpha) + \varepsilon.
\end{equation*}
Theorem \ref{Reduction} implies that there exists a representation $\alpha = \gamma_1\cdots\gamma_M$ 
satisfying property \eqref{NotTooBad} of Theorem \ref{Reduction} such that
$M(\gamma_m) \leq M(\alpha_m)$ for all $m$.  By possibly combining elements of this representation,
we obtain a $\mu$-irreducible representation $\alpha = \beta_1\cdots\beta_N$ which retains property
\eqref{NotTooBad} of Theorem \ref{Reduction} and has
\begin{align*}
	M_1(\alpha) & \leq M(\beta_1)\cdots M(\beta_N)\\
		& \leq M(\gamma_1)\cdots M(\gamma_M) \\
		& \leq M(\alpha_1)\cdots M(\alpha_M) \\
		& \leq M_1(\alpha) + \varepsilon.
\end{align*}
Then by our above remarks we get that $M_1(\alpha) = M(\beta_1)\cdots M(\beta_N)$ which completes the proof.\qed
}

\end{document}